\DeclareMathOperator{\Col}{Col}
\newtheorem{theorem}{Theorem}[section]
\newtheorem{lemma}[theorem]{Lemma}
\newtheorem{definition}[theorem]{Definition}
\newtheorem{claim}[theorem]{Claim}
\numberwithin{equation}{section}
\def\rmark{\mbox{$\rm\bf\rule{0.06em}{1.45ex}\kern-0.05em R$}}
\def\pmark{\mbox{$\rm\bf\rule{0.06em}{1.45ex}\kern-0.05em P$}}
\def\nmark{\mbox{$\rm\bf\rule{0.06em}{1.45ex}\kern-0.05em N$}}
\def\vdash{\mbox{$\rm\| \kern-0.13em -$}}
\begin{document}

\title[On a theorem of Magidor]{On a theorem of Magidor}

\author[Mohammad Golshani]{Mohammad
  Golshani}

\thanks{The  author's research has been supported by a grant from IPM (No. 96030417). The results of this paper were explained by Menachem Magidor to the author during the Sy David Friedman's 60th-Birthday Conference in 2013.} \maketitle

%{ \\ Department of Mathematics\\  Shahid Bahonar University of Kerman, Kerman, Iran}

%\subjclass[2010]{03E35}

%\keywords{ Multiplication mudule, prime submodule, Strongly prime submodule, valuation,
%fractional submodule, pseudo-valuation module}
\begin{abstract}
Assuming $\kappa$ is a supercompact cardinal and $\lambda$ is an inaccessible cardinal above it, we present an idea due to Magidor, to find a generic extension in which $\kappa=\aleph_\omega$ and $\lambda=\aleph_{\omega+1}.$

\end{abstract}
\section{introduction}
Suppose $\kappa$ is a supercompact cardinal and $\lambda$ is an inaccessible cardinal above it. There are several ways to find an extension in which $\kappa$ becomes a singular
cardinal  and $\lambda=\kappa^+,$ let us mention at least two:
\begin{itemize}
\item Suppose $\kappa$ is Laver indestructible. Force with $\Col(\kappa, <\lambda)$. In the extension, $\kappa$ remains supercompact and hence measurable, so one can change the cofinality of $\kappa$ to $\omega$ using Prikry's forcing. One can even makes $\kappa=\aleph_\omega,$ preserving cardinals above $\kappa.$ In the final model, $\kappa$ is a singular cardinal of countable cofinality and $\lambda$ is its successor.
\item Merimovich's supercompact extender based Prikry forcing \cite{merimovich} can be  used as well to get an extension in which $cf(\kappa)=\omega$
and $\lambda=\kappa^+.$
\end{itemize}
In this paper we present an idea of Magidor to present another way of making $\kappa=\aleph_\omega$ and $\lambda=\aleph_{\omega+1},$
which is of independent interest.

Thus suppose that $\kappa$ is a supercompact cardinal and let $\lambda$ be the least  inaccessible cardinal above it.  Let $\mathcal{U}$ be a normal measure on $P_{\kappa}(\lambda)=\{P \subseteq \lambda : o.t(P)<\kappa, P\cap \kappa \in \kappa   \}.$

\begin{definition}
(1). Given $P \in P_{\kappa}(\lambda),$ let $\kappa_P=P\cap\kappa$ and $\lambda_P=o.t(P),$

(2). Given $P, Q \in P_{\kappa}(\lambda),$ we define $P \prec Q$ iff $P \subseteq Q$ and $\lambda_P < \kappa_Q,$
\end{definition}
Let $D=\{P \in P_{\kappa}(\lambda): \lambda_P$ is the least inaccessible cardinal above $\kappa_P  \}.$ Then $D\in \mathcal{U}.$ We now define the forcing notion $(\mathbb{P}, \leq, \leq^*)$ as follows:

\begin{definition}
A condition in $\mathbb{P}$ is a finite sequence $\langle P_1, \dots, P_n, f_0, \dots, f_n, A, F \rangle$ where
\begin{enumerate}
\item $P_1 \prec ... \prec P_n$ are in $D,$
\item $f_0 \in Col(\omega_1, <\lambda_{P_1}),$
\item $f_i\in Col(\lambda^+_{P_i}, <\lambda_{P_{i+1}}), i=1, \dots, n-1,$
\item $f_n \in Col(\lambda^+_{P_n}, <\kappa),$
\item $ A \in \mathcal{U}$ and $A \subseteq D,$
\item $\forall P\in A, P_n \prec P$ and $\sup(ran(f_n))<\kappa_P, $
\item $dom(F)=A,$
\item For all $P \in A,~F(P)\in Col(\lambda^+_{P}, <\kappa)$,
\item If $P \prec Q$ are in $A$, then $\sup(ran(F(P))) < \kappa_Q$.
\end{enumerate}

\end{definition}
\begin{definition}
Suppose $\pi=\langle P_1, \dots, P_n, f_0, \dots, f_n, A, F \rangle$ and $\pi'=\langle Q_1, \dots, Q_m, g_0, \dots, g_m, B, G \rangle$
are two conditions in $\mathbb{P}$. Then
\begin{itemize}
\item [(a)] $\pi' \leq \pi$ ($\pi'$ is an extension of $\pi$) iff
\begin{enumerate}
\item $m\geq n,$
\item $Q_i=P_i, i=1, \dots, n,$
\item $Q_i\in A, i=n+1, \dots, m,$
\item $g_i \leq f_i, i=0, \dots, n,$
\item $g_{i} \leq F(Q_{i},), i=n+1, \dots, m,$
\item $B \subseteq A,$

\item For all $P\in B, ~G(P) \leq F(P).$
\end{enumerate}
\item [(b)] $\pi' \leq^* \pi$ ($\pi'$ is a direct or a Prikry extension of $\pi$) iff $\pi' \leq \pi$ and $m=n.$
\end{itemize}
\end{definition}
Let $G$ be $\mathbb{P}-$generic over $V$. Then in $V[G]$ we obtain the following sequences in the natural way:
\begin{enumerate}
\item A $\prec-$increasing sequence $\langle P_n: 0 <n<\omega \rangle$ of elements of $P_{\kappa}(\lambda)$ with $\lambda= \bigcup_{n<\omega}P_n,$
\item A sequence $\langle F_n: n<\omega \rangle$ such that $F_0$ is $Col(\omega_1, <\lambda_{P_1})-$generic over $V$ and for $n>0, F_n$ is $Col(\lambda^+_{P_n}, <\lambda_{P_{n+1}})-$generic over $V$.
\end{enumerate}
The next lemma can be proved as in \cite{magidor}.
\begin{lemma}
\begin{enumerate}
\item [(a)] $(\mathbb{P}, \leq, \leq^*)$ satisfies the Prikry property.

\item [(b)] If $X \in V[G]$ is a bounded subset of $\kappa,$ then $X \in V[\langle F_i: i<n  \rangle]$ for some $n<\omega.$

\item [(c)] $CARD^{V[G]} \cap \kappa=\{\omega, \omega_1, \lambda_{P_n}, \lambda^+_{P_n}: 0<n<\omega   \},$

\item [(d)] $V[G]\models \kappa=\aleph_{\omega}$.
\end{enumerate}
\end{lemma}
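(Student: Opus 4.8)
The plan is to read $(\mathbb{P},\le,\le^*)$ as a Magidor-style supercompact Prikry forcing with interleaved L\'evy collapses and to prove the four clauses in the order (a)$\to$(b)$\to$(c)$\to$(d), each resting on its predecessors. \emph{Clause (a).} Fix $\pi=\langle P_1,\dots,P_n,f_0,\dots,f_n,A,F\rangle$ and a sentence $\varphi$. I would first factor the forcing below $\pi$ as a product $\mathbb{C}_{<n}\times\mathbb{Q}_{\ge n}$, where
\[
\mathbb{C}_{<n}=\Col(\omega_1,<\lambda_{P_1})\times\Col(\lambda^+_{P_1},<\lambda_{P_2})\times\cdots\times\Col(\lambda^+_{P_{n-1}},<\lambda_{P_n})
\]
is the already realized block of collapses, a pure forcing (on which $\le$ and $\le^*$ coincide) of size $\le\lambda_{P_n}$, and $\mathbb{Q}_{\ge n}$ carries the genuine Prikry structure, namely the top collapse $\Col(\lambda^+_{P_n},<\kappa)$ together with the measure part $(A,F)$. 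The crucial structural fact is that the direct-extension order of $\mathbb{Q}_{\ge n}$ is $\lambda^+_{P_n}$-closed: pointwise meets of fewer than $\lambda^+_{P_n}$ many collapse conditions exist because each $\Col(\lambda^+_P,<\kappa)$ is $\lambda^+_P$-closed with $\lambda_P>\lambda_{P_n}$, the associated measure-one sets may be intersected by the $\kappa$-completeness of $\mathcal{U}$, and clauses (6) and (9) of the definition are restored by a diagonal intersection using the normality of $\mathcal{U}$. The heart of the matter is the Prikry property for $\mathbb{Q}_{\ge n}$ itself, which I would establish by Magidor's argument: running over the finitely many one-step extensions by points $Q\in A$, asking at each node whether $\varphi$ can be forced or refuted by strengthening collapses alone, homogenising the answer on a measure-one subset by normality, and fusing these through a diagonal intersection to produce a single $A^*$ for which no extension adding points can decide $\varphi$ in a way a direct extension does not already mirror. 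Granting this, the Prikry property for $\mathbb{P}$ follows from the factorisation: enumerating $\mathbb{C}_{<n}$ below the relevant condition $c_0$ in length $\le\lambda_{P_n}$ and using the $\lambda^+_{P_n}$-closure of $\le^*$ on $\mathbb{Q}_{\ge n}$, I would build a single $\le^*$-decreasing sequence in $\mathbb{Q}_{\ge n}$ deciding, for every $c\le c_0$, the $\mathbb{Q}_{\ge n}$-statement ``$c\Vdash_{\mathbb{C}_{<n}}\varphi$''; a lower bound $q^*$ then yields a direct extension $(c^*,q^*)\le^*\pi$ deciding $\varphi$. I expect this Prikry property to be the main obstacle, the delicate point being precisely the coordination of the upper collapse conditions across the measure-one set, handled by diagonal intersections exactly as in \cite{magidor}.

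\emph{Clause (b).} Let $\dot X$ name a subset of some $\check\delta$ with $\delta<\kappa$. Since each $P_n\in D$ gives $\kappa_{P_n}<\lambda_{P_n}<\kappa$ and, in $V[G]$, $\bigcup_n P_n=\lambda$ forces $\sup_n\kappa_{P_n}=\kappa$, we have $\sup_n\lambda_{P_n}=\kappa$, so I may fix $n$ with $\delta<\lambda_{P_n}$ and work below a condition of $G$ whose stem has length at least $n$. Reusing the factorisation $\mathbb{C}_{<n}\times\mathbb{Q}_{\ge n}$ and noting that $\langle F_i:i<n\rangle$ is exactly the $\mathbb{C}_{<n}$-generic, clause (a) lets me decide each statement $\check\alpha\in\dot X$ (for $\alpha<\delta$) by a direct extension in $\mathbb{Q}_{\ge n}$; as that order is $\lambda^+_{P_n}$-closed and $\delta<\lambda_{P_n}$, a single direct extension decides all $\delta$ many of them. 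Hence $\mathbb{Q}_{\ge n}$ adds no new subset of $\delta$ over $V[\langle F_i:i<n\rangle]$, so $X\in V[\langle F_i:i<n\rangle]$.

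\emph{Clause (c).} I would analyse $V[\langle F_i:i<n\rangle]=V^{\mathbb{C}_{<n}}$ directly. The block $\mathbb{C}_{<n}$ collapses each of the intervals $(\omega_1,\lambda_{P_1}),(\lambda^+_{P_1},\lambda_{P_2}),\dots,(\lambda^+_{P_{n-1}},\lambda_{P_n})$ while preserving their endpoints: each $\lambda_{P_i}$ is inaccessible in $V$ and so is preserved by the $\lambda_{P_i}$-chain condition of the collapse beneath it, and each $\lambda^+_{P_i}$ is preserved by the $\lambda^+_{P_i}$-closure of the next collapse together with the smallness of the earlier factors. Thus the cardinals of $V^{\mathbb{C}_{<n}}$ below $\lambda_{P_n}$ are exactly $\omega,\omega_1,\lambda_{P_1},\lambda^+_{P_1},\dots,\lambda^+_{P_{n-1}},\lambda_{P_n}$, and each remains a cardinal at every later stage because the subsequent collapses are $\lambda^+_{P_n}$-closed on the relevant initial segment. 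Combining this with clause (b) --- any would-be collapsing map below $\kappa$ is coded by a bounded subset of $\kappa$ and hence already lives at a finite stage --- I conclude that the cardinals of $V[G]$ below $\kappa$ are precisely $\{\omega,\omega_1\}\cup\{\lambda_{P_n},\lambda^+_{P_n}:0<n<\omega\}$.

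\emph{Clause (d).} This is immediate from (c): the listed cardinals enumerate in order type $\omega$ and, since $\sup_n\lambda_{P_n}=\kappa$, are cofinal in $\kappa$. A supremum of cardinals is itself a cardinal, so $\kappa$ is preserved; as it has exactly $\omega$ many infinite cardinals below it, $V[G]\models\kappa=\aleph_\omega$.
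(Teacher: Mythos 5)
Your proposal is correct in outline and is essentially the argument the paper itself invokes: the paper offers no proof of this lemma beyond the remark that it ``can be proved as in \cite{magidor}'', and your factorization into the finite block of realized collapses times the upper Prikry part, with $\le^*$-closure, diagonal intersections via normality, and the standard reflection of bounded sets to finite stages, is exactly the Magidor-style argument that citation points to. The only step you (rightly) flag as delicate --- the Prikry property for the upper part with the interleaved collapse coordinates --- is left at the same level of detail as the paper, so there is nothing to compare beyond that.
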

But note that in $V[G], \lambda$ is collapsed to $\kappa.$ We now define an inner model of $V[G]$ in which $\lambda$ is preserved. Let
$T=\{(A,F): A\in \mathcal{U}, F$ is a function with domain $A$ and $\forall P \in A,~F(P)\in Col(\lambda^+_P, <\kappa) \}.$

 Define an equivalence relation $\equiv$ on $T$ by $(A,F) \equiv (B,G)$ iff there is a set $C\in \mathcal{U}, C \subseteq A\cap B$ such that for all $P$ in $C$ we have $F(P)=G(P).$ Let $[(A, F)]_{\equiv}$ denote the equivalence class of $(A,F)$ with respect to $\equiv$. Also let
\begin{center}
$G_{\equiv}=\{ [(A, F)]_{\equiv}: \exists \pi\in G, \pi=\langle P_1, \dots, P_n, f_0, \dots, f_n, A, F \rangle   \}.$
\end{center}
Define $V^*$ to be
\begin{center}
$V^*=V[\langle \kappa_{P_n}: 0<n<\omega\rangle, \langle F_n: 0<n<\omega \rangle, G_\equiv].$
\end{center}
Our main theorem is as follows:
\begin{theorem}
\begin{enumerate}
\item [(a)] $V^*\models \kappa=\aleph_{\omega},$

\item [(b)] $\lambda$ remains a cardinal in $V^*,$

\item [(c)] Let  $\mu \in (\kappa, \lambda)$. Then $\langle P_n\cap \mu: 0<n<\omega \rangle \in V^*$,

\item [(d)] All cardinals $\mu \in (\kappa, \lambda)$ are collapsed in $V^*$ into $\kappa$
\end{enumerate}
\end{theorem}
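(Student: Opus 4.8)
The plan is to establish the parts in the order (a), (c), (d), (b), since (d) is a corollary of (c) and the preservation of $\lambda$ in (b) carries the real content. Two observations are used throughout. First, $V^*\subseteq V[G]$, so every cardinal of $V[G]$ remains a cardinal of $V^*$ (an inner model has fewer bijections). Second, the generics $\langle F_n\rangle\in V^*$ let $V^*$ recover the order types $\lambda_{P_n}$, as the endpoints of the interval $[\lambda_{P_n}^+,\lambda_{P_{n+1}})$ that $F_n$ collapses; and since every $P\in P_\kappa(\lambda)$ satisfies $P\cap\kappa\in\kappa$, each $P_n\cap\kappa$ is the ordinal $\kappa_{P_n}$, which $V^*$ already knows.

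Part (a) is then immediate: by Lemma (c) the cardinals of $V[G]$ below $\kappa$ are exactly $\{\omega,\omega_1,\lambda_{P_n},\lambda_{P_n}^+:0<n<\omega\}$; these remain cardinals of $V^*$, while the $F_n\in V^*$ collapse every other ordinal below $\kappa$. Hence $V^*$ and $V[G]$ have the same cardinals below $\kappa$, an increasing $\omega$-sequence with supremum $\kappa$, so $V^*\models\kappa=\aleph_\omega$.

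For part (c) I would pass to the ultrapower $j:V\to M\cong\operatorname{Ult}(V,\mathcal U)$, with $\operatorname{crit}(j)=\kappa$ and seed $a=[\operatorname{id}]_{\mathcal U}=j[\lambda]$, so that $\kappa_a=\kappa$ and $\lambda_a=\lambda$. The map $[(A,F)]_\equiv\mapsto j(F)(a)$ identifies $G_\equiv$ with the filter $H$ on $\operatorname{Col}(\lambda^+,<j(\kappa))^M$ generated by these germs, and the standard Prikry-type argument shows $H$ is $M$-generic. Fixing $\mu<\lambda$, I would run a density argument: the conditions whose top part $(A,F)$ codes the value $P_n\cap\mu$ into $F$ on a measure-one set are dense, and because $\mu<\lambda$ the relevant decoding is $\equiv$-invariant (it is captured inside $M$ using $j\restriction\mu\in M$, which belongs to $M$ since ${}^\lambda M\subseteq M$). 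Genericity then lets $V^*$ read $\langle P_n\cap\mu\rangle$ off $G_\equiv$ together with $\langle\kappa_{P_n}\rangle$ and $\langle\lambda_{P_n}\rangle$. Part (d) follows at once: given $\mu\in(\kappa,\lambda)$, the sequence $\langle P_n\cap\mu\rangle\in V^*$ exhibits $\mu=\bigcup_{n}(P_n\cap\mu)$ as a countable union of sets of order type $<\kappa$, and the increasing enumerations (each definable in $V^*$ from $P_n\cap\mu$) assemble in $V^*$ into a surjection from $\kappa$ onto $\mu$, so $|\mu|^{V^*}=\kappa$.

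Part (b) is the heart of the matter and where I expect the main difficulty. It suffices to show there is no surjection $\kappa\to\lambda$ in $V^*$, for then $\lambda$, lying above all the collapsed $\mu\in(\kappa,\lambda)$, is the cardinal successor of $\kappa$. The strategy is twofold. On the one hand, $\langle\kappa_{P_n}\rangle$ and $\langle F_n\rangle$ are coded by subsets of $\kappa$, and the ``lower'' Magidor forcing adding them preserves $\lambda=\kappa^+$ by the Prikry property (Lemma (a)) together with the fact that its collapses act below $\kappa$; call the resulting model $W_0$, so that $\lambda$ is a cardinal in $W_0$ and $V^*=W_0[H]$. On the other hand, $\mathbb C=\operatorname{Col}(\lambda^+,<j(\kappa))^M$ is $\lambda^+$-closed in $M$, hence adds no new function from $\kappa$ to the ordinals over $M$. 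The crux is to transfer this closure to the actual extension: one must show $\mathbb C$ remains $\lambda$-distributive over $W_0$, despite $H$ being only $M$-generic and $W_0$ being obtained from $V$ by the Prikry-type data. I would argue this via ${}^\lambda M\subseteq M$, which makes $\mathbb C$ coincide with its $V$-computation and lets one approximate any candidate surjection by conditions, combined with a homogeneity argument: the automorphisms of $\mathbb P$ permuting the admissible stem points while fixing their critical points, order types, collapse generics, and germs fix $V^*$, so a surjection $\kappa\to\lambda$ in $V^*$ cannot depend on the actual $P_n$'s and must already live in a $\lambda$-preserving submodel, contradicting the regularity (indeed inaccessibility) of $\lambda$ in $V$. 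The delicate point throughout is exactly this interaction between the $M$-generic collapse $H$ and the $V$-side Prikry data, i.e. showing that their combination does not conspire to collapse $\lambda$ even though the full sequence $\langle P_n\rangle$, present in $V[G]$ but absent from $V^*$, does.
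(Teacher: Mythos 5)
Your overall architecture matches the paper's: (a) is easy, (d) follows from (c) exactly as you say, (b) is handled (in the paper too) only by an appeal to Magidor-style permutation arguments, and the substantive content is (c), which both you and the paper attack by coding $P\cap\mu$ into the measure-one collapse part $F$ of a condition. The paper works directly with the forcing rather than through the ultrapower: it extends $F$ so that $F(P)$ and $F(P')$ are pairwise incompatible whenever $\lambda_P=\lambda_{P'}$ but $P\cap\mu\neq P'\cap\mu$ (possible because $\Col(\lambda^+_P,<\lambda_Q)$ has large antichains below any condition), so that the generic collapse $F_j\supseteq F(Q_j)$ pins down $Q_j\cap\mu$.

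The genuine gap is in your decoding step for (c). You assert that ``the relevant decoding is $\equiv$-invariant'' because $j\restriction\mu\in M$, and that $V^*$ can then read $\langle P_n\cap\mu\rangle$ off $G_\equiv$. But $V^*$ knows the coding function $F$ only up to $\equiv$, i.e.\ only on an unspecified measure-one set, and $V^*$ has no access to the actual points $P_n$; so it can neither evaluate $F$ at $P_n$ nor verify that $P_n$ lies in the set where a chosen representative of $[(A,F)]_{\equiv}$ agrees with the true $F$. Closure of $M$ under $\lambda$-sequences does not address this: the obstruction lives on the $V$-side, in the ambiguity of the representative. The paper closes exactly this gap with its second Claim, a permutation argument inside the forcing $\mathbb{P}^*$ as computed in $V^*$: any two extensions of $\pi$ in $\mathbb{P}^*$ are conjugate by a permutation of $\lambda$ fixing $\pi$, and if two such extensions disagreed about some $Q_i\cap\mu$, one could give them a common top point and conclude that a single generic collapse would have to extend the two incompatible conditions $F(Q_j)$ and $F(Q'_j)$ --- a contradiction. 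Hence every $\mathbb{P}^*$-extension of $\pi$ decides the values $Q_i\cap\mu$, which is what makes the tail sequence definable in $V^*$. Some version of this homogeneity step is indispensable and is missing from your (c); note also that the decoding must use the collapse generics $F_n$ themselves, which carry the coded information, not merely $G_\equiv$, $\langle\kappa_{P_n}\rangle$ and $\langle\lambda_{P_n}\rangle$. Your sketch of (b) is consistent with the paper, which likewise defers to Magidor's permutation arguments, though your proposed factorization $V^*=W_0[H]$ and the distributivity transfer are left unverified, as you yourself acknowledge.
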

\begin{proof}
(a) is trivial, and (b) can be proved as in \cite{magidor} using permutation arguments. Also (d) follows from (c), so let's prove (c).

Suppose that the condition $\pi\in G$ forces ``for some $\mu,$ with $ \kappa<\mu<\lambda,$ we have $\langle P_n\cap \mu: 0<n<\omega \rangle \notin V^*$''. For $P\in P_{\kappa}(\lambda)$ denote by $\mu_P$ the order type of $P\cap \mu.$ Let $\pi=\langle P_1, \dots, P_n, f_0, \dots, f_n, A, F \rangle.$

Without loose of generality we can assume that for $P\in A, \mu\in P,$ so $\mu_P <\lambda_P.$
\begin{claim}
By extending $F$ if necessary we can assume that for every $P, P' \in A$ with $ \lambda_P=\lambda_{P'}$ and $P\cap \mu \neq P' \cap \mu$ we have $F(P)$ is incompatible with $F(P')$.
\end{claim}
\begin{proof}
Use the fact that for any $Q$ in $A$ with $P, P' \prec Q$, the forcing notion $ Col(\lambda^+_{P}, <\lambda_Q)$ has the property that  below any condition there are at least $2^{\mu_Q}$ incompatible conditions.
\end{proof}
Note that $G$ is generic over $V^*$ with respect to a forcing notion which is a subset of $\mathbb{P}$ and is definable in $V^*$. Denote this set of conditions by $\mathbb{P}^*$. Then $G \subseteq \mathbb{P}^*.$
\begin{claim}
If $\pi' \leq \pi, \pi' \in \mathbb{P}^*, \pi'=\langle Q_1, \dots, Q_m, f_0, \dots, f_m, B, G \rangle,$ then for all $n<i\leq m, Q_i\cap \mu=P_i\cap \mu.$
\end{claim}
\begin{proof}
Using the permutation arguments as in \cite{magidor}, one can show that for any two conditions $\eta, \eta' \in \mathbb{P}^*$ there is some permutation $\sigma$ of $\lambda$ such that $\sigma(\eta)$ is compatible with $\eta'$, and if $\eta, \eta'$ are both extensions of $\pi,$ we can pick $\sigma$ such that $\sigma(\pi)=\pi.$

If the claim fails, then we can find two extensions $\eta, \eta'$ of $\pi$ such that if
$$\eta=\langle Q_1, \dots, Q_m, f_0, \dots, f_m, B, G\rangle$$
 and
 $$ \eta'=\langle Q'_1, \dots, Q'_m, f_0, \dots, f_m, B, G \rangle,$$ then there is $n<i\leq m$ such that $Q_i\cap \mu \neq Q'_i\cap \mu$ and such that for some permutation $\sigma$ we have $\sigma(\pi)=\pi$ and $\sigma(\eta)=\eta'.$ We can extend $\eta$ and $\eta'$ by picking the right member to $B$ to put on the top of both of them, so that we can assume that $Q_m=Q'_m.$ Let $n<j<m$ be maximal such that  $Q_j\cap \mu \neq Q'_j\cap \mu.$ But then $Q=Q_{j+1}=Q'_{j+1}$ satisfies $Q_j \prec Q, Q'_j \prec Q.$ $F_j$ must be an extension of both $F(Q_j)$ and $F(Q'_j)$ ($F$ is the function in the condition $\pi$). But by our assumption about $\pi, F(Q_j)$ and $F(Q'_j)$ are supposed to be incompatible elements of $Col(\lambda^+_{Q_j}, <\lambda_Q).$ Contradiction!.
\end{proof}
Given the above Claim, we can easily see that $\langle P_k\cap \mu: n<k<\omega \rangle$ is in $V^*$. But then trivially the sequence $\langle P_n\cap \mu: 0<n<\omega \rangle$, which is the same sequence with the addition of a finite initial segment, is in $V^*$ as well.
\end{proof}
One can use the model $V^*$ to  prove the following.
\begin{theorem}
The cardinal $\aleph_{\omega+1}^{V^*}=\lambda$ is an inaccessible cardinal in $\text{HOD}^{V^*}$.
\end{theorem}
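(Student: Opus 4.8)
The plan is to prove that $\lambda$ is inaccessible in $\text{HOD}^{\VS}$ by trapping every ordinal‑definable set of $\VS$ inside an inner model over which $\lambda$ has retained its inaccessibility. Let $W=V[\langle \kappa_{P_n}:0<n<\omega\rangle]$. It suffices to establish two facts: (i) $\lambda$ is inaccessible in $W$, and (ii) $\text{HOD}^{\VS}\subseteq W$. Granting these, the theorem is immediate: a cofinal map $f\colon\delta\to\lambda$ with $\delta<\lambda$ in $\text{HOD}^{\VS}$ would lie in $W$ and contradict the regularity of $\lambda$ there, so $\lambda$ is regular in $\text{HOD}^{\VS}$; and for each $\delta<\lambda$ we have $\mathcal{P}(\delta)\cap\text{HOD}^{\VS}\subseteq\mathcal{P}(\delta)\cap W$, whence $(2^{\delta})^{\text{HOD}^{\VS}}\le(2^{\delta})^{W}<\lambda$ as $\lambda$ is strong limit in $W$. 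Thus $\lambda$ is uncountable, regular and strong limit, hence inaccessible, in $\text{HOD}^{\VS}$.

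For (i) I would argue, as in \cite{magidor}, that $\langle \kappa_{P_n}:0<n<\omega\rangle$ is a Prikry‑generic sequence over $V$ for the normal measure $\bar{\mathcal{U}}=\{X\subseteq\kappa:\{P\in D:\kappa_P\in X\}\in\mathcal{U}\}$ on $\kappa$ obtained by projecting $\mathcal{U}$ along $P\mapsto\kappa_P$. Thus $W$ is a Prikry extension of $V$, and since the Prikry forcing for $\bar{\mathcal{U}}$ has cardinality $2^{\kappa}<\lambda$ (here $\lambda$ is inaccessible above $\kappa$), it is small relative to $\lambda$ and preserves the inaccessibility of $\lambda$; only $\cf(\kappa)$ is affected, dropping to $\omega$. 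I note in passing that $\langle \lambda_{P_n}\rangle$ is recoverable in $W$, since $\lambda_{P_n}$ is the least inaccessible of $V$ above $\kappa_{P_n}$, so all of the genuinely definable data of the Prikry sequence already lives in $W$.

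The heart of the proof is (ii). Here I would write $\VS=W[\langle F_n:0<n<\omega\rangle, G_{\equiv}]$ and show that the forcing $\mathbb{R}\in W$ effecting this extension adds no ordinal‑definable set of ordinals beyond $W$. The factor producing $\langle F_n\rangle$ is a product of L\'evy collapses $\Col(\omega_1,<\lambda_{P_1})\times\prod_{0<n<\omega}\Col(\lambda^+_{P_n},<\lambda_{P_{n+1}})$, which is weakly homogeneous and lies in $W$. The factor producing $G_{\equiv}$ collapses the interval $(\kappa,\lambda)$, and here I would invoke exactly the permutation machinery of the Claim in the proof of part~(c): for any bound $\theta$ with $\kappa<\theta<\lambda$, every permutation $\sigma$ of $\lambda$ fixing $[0,\theta)$ pointwise induces an automorphism of $\mathbb{R}$ which fixes $W$ and all ordinal parameters below $\theta$, while moving the upper parts $P_n\cap\mu$ for $\mu>\theta$. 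Given $X\in\text{HOD}^{\VS}$ defined by $\phi(\cdot,\bar\gamma)$, one chooses $\theta$ above those parameters $\bar\gamma$ that are below $\lambda$ (the rest being fixed automatically, since $\sigma$ permutes only $\lambda$); the compatibility argument of part~(c) then shows that the truth value of $\phi(\check\beta,\check{\bar\gamma})$ is forced by the weakest condition, so that $X=\{\beta:1_{\mathbb{R}}\Vdash\phi(\check\beta,\check{\bar\gamma})\}$ is definable over $W$ from the set $\mathbb{R}\in W$ and the ordinals $\bar\gamma$, and hence $X\in W$.

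The step I expect to be the main obstacle is the weak homogeneity of the $G_{\equiv}$‑factor over $W$. Unlike the L\'evy factors, $G_{\equiv}$ is, up to the equivalence $\equiv$, the value at the seed $[\id]_{\mathcal{U}}=j''\lambda$ of an ultrapower L\'evy collapse $\Col((\lambda^+)^{M},<j(\kappa))^{M}$, where $j\colon V\to M$ is the ultrapower by $\mathcal{U}$; its very definition refers to $\mathcal{U}$, which need not be ordinal definable. The careful point is therefore to route the homogeneity entirely through the permutations $\sigma$ of $\lambda$—which are available without reference to $\mathcal{U}$—and to verify that these act with enough transitivity on the possible values of the upper parts $P_n\cap\mu$ ($\mu>\theta$) to send any two conditions to compatible ones while fixing a prescribed initial segment. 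This is precisely the incompatibility‑of‑collapse‑conditions phenomenon isolated in the Claim of part~(c) (below any condition of $\Col(\lambda^+_{P},<\lambda_Q)$ there are $2^{\mu_Q}$ incompatible conditions), now to be deployed uniformly in $\mu$ rather than for a single $\mu$; establishing this uniformity is the crux that must be checked.
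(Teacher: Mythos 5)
The paper states this theorem without proof (it is introduced only by ``One can use the model $V^*$ to prove the following''), so there is no argument in the text to compare yours against. Judged on its own terms, your high-level strategy --- locate an inner model $W=V[\langle\kappa_{P_n}:0<n<\omega\rangle]$ in which $\lambda$ stays inaccessible and show $\text{HOD}^{V^*}\subseteq W$ --- is the natural and almost certainly the intended route, and your part (i) is essentially complete: the $\kappa_{P_n}$ form a Prikry-generic sequence for the projected normal measure by Mathias's criterion, the Prikry forcing has size at most $2^{2^{\kappa}}<\lambda$ because $\lambda$ is a strong limit, and small forcing preserves inaccessibility.

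The problem is part (ii), which you yourself flag as ``the crux that must be checked'': as written it is a genuine gap, not a proof. Two concrete points. First, you assume without justification that $V^*$ is a generic extension of $W$ by a forcing $\mathbb{R}\in W$ that factors as a product of L\'evy collapses times a ``$G_{\equiv}$-factor''; $V^*$ is defined as $V[\langle\kappa_{P_n}\rangle,\langle F_n\rangle,G_{\equiv}]$, and while the intermediate-model theorem makes it a generic extension of $W$ by \emph{some} complete subalgebra of the Boolean completion of $\mathbb{P}$, the concrete product decomposition on which your homogeneity argument rests is precisely what has to be established. Second, the assertion that ``every permutation $\sigma$ of $\lambda$ fixing $[0,\theta)$ pointwise induces an automorphism'' is false as stated: for $\sigma$ to act on $\mathbb{P}$ at all it must send $D$ to $D$ and measure-one sets to measure-one sets, and for it to fix $G_{\equiv}$ (hence the name of $V^*$) one needs $\{P:\sigma''P=P\}\in\mathcal{U}$; this holds for permutations with support of size $<\kappa$ --- the ones actually used in the Claim in the proof of part (c) --- but not for arbitrary permutations fixing an initial segment. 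The correct argument must interleave these small-support automorphisms with the homogeneity of the interleaved collapses to show that the truth value of ``$V^*\models\phi(\check\beta,\check{\bar\gamma})$'' below a condition depends only on data recoverable from the stem $\langle\kappa_{P_1},\dots,\kappa_{P_n}\rangle$, so that the resulting set of ordinals is computable in $W$ from the generic Prikry sequence; none of that is carried out here. So the skeleton is right, but the load-bearing step is missing.
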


Mohammad Golshani,
School of Mathematics, Institute for Research in Fundamental Sciences (IPM), P.O. Box:
19395-5746, Tehran-Iran.

E-mail address: golshani.m@gmail.com

URL: http://math.ipm.ac.ir/golshani/
\end{document}